\documentclass{article}
%%%%%%%%%%%%%%%%%%%%%%%%%%%%%%%%%%%%%%%%%%%%%%%%%%%%%%%%%%%%%%%%%%%%%%%%%%%%%%%%%%%%%%%%%%%%%%%%%%%%%%%%%%%%%%%%%%%%%%%%%%%%%%%%%%%%%%%%%%%%%%%%%%%%%%%%%%%%%%%%%%%%%%%%%%%%%%%%%%%%%%%%%%%%%%%%%%%%%%%%%%%%%%%%%%%%%%%%%%%%%%%%%%%%%%%%%%%%%%%%%%%%%%%%%%%%
\usepackage{amssymb}
\usepackage{amsfonts}
\usepackage{amsmath}
\usepackage{harvard}

\setcounter{MaxMatrixCols}{10}
%TCIDATA{OutputFilter=LATEX.DLL}
%TCIDATA{Version=5.50.0.2960}
%TCIDATA{<META NAME="SaveForMode" CONTENT="1">}
%TCIDATA{BibliographyScheme=BibTeX}
%TCIDATA{Created=Thursday, February 13, 2014 13:48:17}
%TCIDATA{LastRevised=Monday, February 02, 2015 15:25:40}
%TCIDATA{<META NAME="GraphicsSave" CONTENT="32">}
%TCIDATA{<META NAME="DocumentShell" CONTENT="Articles\SW\Standard LaTeX Article (Harvard)">}
%TCIDATA{CSTFile=LaTeX article (bright).cst}
%TCIDATA{ComputeDefs=
%$K=[x_{kj}]=\left\{ 
%\begin{array}{c}
%T_{k-1}\left( \frac{\delta _{j}}{2}\right) ;k=- \\ 
%\frac{1}{2}T_{k-1}\left( \frac{\delta _{j}}{2}\right) ~;k=n%
%\end{array}%
%\right. j=1,2,\ldots ,n$
%}

\newtheorem{theorem}{Theorem}

\newtheorem{corollary}[theorem]{Corollary}

\newtheorem{example}[theorem]{Example}

\newtheorem{lemma}[theorem]{Lemma}

\newenvironment{proof}[1][Proof]{\noindent\textbf{#1.} }{\ \rule{0.5em}{0.5em}}

\input{tcilatex}

\begin{document}

\title{Integer Powers of Complex Tridiagonal and Anti-Tridiagonal Matrices}
\author{ Hatice K\"{u}bra Duru\thanks{%
hkduru@selcuk.edu.tr} \&Durmu\c{s} Bozkurt\thanks{%
dbozkurt@selcuk.edu.tr} \\
%EndAName
Department of Mathematics, Science Faculty of Sel\c{c}uk University}
\maketitle

\begin{abstract}
In this paper, we derive the general expression of the $r-$th power for some 
$n$-square complex tridiagonal matrices.Also one type is given eigenvalues
and eigenvectors of complex anti-tridiagonal matrices Additionally, we
obtain the complex factorizations of Fibonacci polynomials.
\end{abstract}

\section{Introduction}

\qquad Elouafi and Hadj [1] offered tridiagonal matrix powers and inverse.
Guti\'{e}rrez [2,3] obtained a general expression for the entries of the $q-$%
th power $\left( q\in 
%TCIMACRO{\U{2115} }%
%BeginExpansion
\mathbb{N}
%EndExpansion
\right) $ of the $n\times n$ complex tridiagonal matrix $tridiag_{n}\left(
a_{1},a_{0},a_{-1}\right) ~$for all $n\in 
%TCIMACRO{\U{2115} }%
%BeginExpansion
\mathbb{N}
%EndExpansion
~$and $2\left( q-1\right) \leq n$. Rimas [4-8] enquired the arbitrary
positive integer powers for some tridiagonal matrices. \"{O}tele\c{s} and
Akbulak [9,10] generalized Rimas's the some results and get complex
factorization formula for the generalized Fibonacci-Pell numbers.

Let%
\begin{equation}
A:=\left[ 
\begin{array}{cccccc}
a & 2b &  &  &  & 0 \\ 
b & a & b &  &  &  \\ 
& b & a & b &  &  \\ 
&  & \ddots & \ddots & \ddots &  \\ 
&  &  & b & a & 2b \\ 
0 &  &  &  & b & a%
\end{array}%
\right]  \label{1}
\end{equation}%
and%
\begin{equation}
A^{\dag }:=\left[ 
\begin{array}{cccccc}
a & b &  &  &  & 0 \\ 
b & a & -b &  &  &  \\ 
& -b & a & b &  &  \\ 
&  & b & a & -b &  \\ 
&  &  & -b & a & \ddots \\ 
0 &  &  &  & \ddots & \ddots%
\end{array}%
\right]  \label{2}
\end{equation}%
where $b\neq 0$ and $a,b\in 
%TCIMACRO{\U{2102} }%
%BeginExpansion
\mathbb{C}
%EndExpansion
$. In this paper, we want to $r-$th power obtain of an $n-$square complex
tridiagonal matrices in $(1)$ and $(2)$.

We derive expression of the $r-$th power $\left( r\in 
%TCIMACRO{\U{2115} }%
%BeginExpansion
\mathbb{N}
%EndExpansion
\right) $ a matrix applying the well-known expression $%
G^{r}=SJ^{r}S^{-1}[13] $, where $J$ \ is the Jordan's form of the matrix $G$
and $S~$ is the transforming matrix of $G$. We need the eigenvalues and
eigenvectors of the matrices $A$ and $A^{\dag }$, respectively,\ to
calculate transforming matrices.

\bigskip Let $Q$ be the following $n\times n$ tridiagonal matrix%
\begin{equation}
Q:=\left[ 
\begin{array}{cccccc}
0 & 2 &  &  &  &  \\ 
1 & 0 & 1 &  &  &  \\ 
& 1 & 0 & 1 &  &  \\ 
&  & \ddots & \ddots & \ddots &  \\ 
&  &  & 1 & 0 & 2 \\ 
&  &  &  & 1 & 0%
\end{array}%
\right] .\   \label{3}
\end{equation}%
Then, the eigenvalues of $Q$ is%
\begin{equation*}
\mu _{k}=2\cos \left( \frac{(k-1)\pi }{n-1}\right) ,\ k=1,2,\ldots ,n\ [4].
\end{equation*}

The Chebyshev polynomials of the first kind $T_{n}(x)$ and second kind $%
U_{n}(x)$ are defined as $[14]$%
\begin{equation}
T_{n}(x)=\cos \left( n\arccos x\right) ,~~~-1\leq x\leq 1  \label{4}
\end{equation}%
and%
\begin{equation}
U_{n}(x)=\frac{\sin ((n+1)\arccos x)}{\sin (\arccos x)},\ \ -1\leq x\leq 1.
\label{5}
\end{equation}

All roots of the polynomial $U_{n}(x)$ are included in the interval $[-1,1]$
and can be found using the relation%
\begin{equation}
x_{nk}=\cos \left( \frac{k\pi }{n+1}\right) ,\ \ k=1,2,\ldots ,n.  \label{6}
\end{equation}

\section{Eigenvalues and eigenvectors of $A$ and $A^{\dagger }$}

\begin{theorem}
\bigskip Let $A$ be as in $(1)$. Then the eigenvalues and eigenvectors of
the matrix A are 
\begin{equation}
\lambda _{k}=a+2b\cos \left( \frac{(k-1)\pi }{n-1}\right) ,\ k=1,2,\ldots ,n
\label{7}
\end{equation}%
and%
\begin{equation}
x_{kj}=\left\{ 
\begin{array}{l}
~~T_{k-1}\left( \frac{\delta _{j}}{2}\right) ;k=1,2,\ldots ,n-1 \\ 
\frac{1}{2}T_{k-1}\left( \frac{\delta _{j}}{2}\right) ;k=n%
\end{array}%
\right. j=1,2,\ldots ,n  \label{8}
\end{equation}%
where $\delta _{j}=\frac{\lambda _{j}-a}{b}$ and $T_{k}(x)$ is Chebyshev
polynomial of the first kind.
\end{theorem}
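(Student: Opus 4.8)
The plan is to establish the eigenvalues via a similarity reduction to the known matrix $Q$ from equation (3), and then to verify the eigenvector formula directly. First I would observe that $A = aI + bQ$, since subtracting $a$ from each diagonal entry and factoring out $b$ turns the off-diagonal entries $\{b,b,\ldots\}$ and the corner entries $\{2b,2b\}$ into exactly the entries of $Q$. Because $A$ is an affine function of $Q$, its eigenvalues are $\lambda_k = a + b\mu_k$, where $\mu_k = 2\cos\!\left(\frac{(k-1)\pi}{n-1}\right)$ are the eigenvalues of $Q$ quoted from [4]. This immediately yields the claimed formula (7), and it also shows that $A$ and $Q$ share the same eigenvectors, so it suffices to find the eigenvectors of $Q$.

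Next I would verify the eigenvector formula (8) by direct substitution into the eigenvalue equation $A x = \lambda_j x$, or equivalently $Q x = \mu_j x$ with $\mu_j = \delta_j$ (note $\delta_j = (\lambda_j - a)/b = \mu_j$). Writing out the $k$-th row of $Q x = \delta_j x$ for an interior index $2 \le k \le n-1$ gives the three-term recurrence $x_{k-1,j} + x_{k+1,j} = \delta_j\, x_{k,j}$. The key fact to invoke is the standard recurrence for Chebyshev polynomials of the first kind, $T_{k+1}(t) + T_{k-1}(t) = 2t\, T_k(t)$; setting $t = \delta_j/2$ turns this into exactly $T_{k-1}(\delta_j/2) + T_{k+1}(\delta_j/2) = \delta_j\, T_k(\delta_j/2)$, which matches the interior recurrence once we identify $x_{kj}$ with $T_{k-1}(\delta_j/2)$. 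So the interior rows are satisfied by the stated ansatz, using $T_0 = 1$ and $T_1(t)=t$ for the base cases.

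The main obstacle, and the reason for the piecewise definition in (8), lies in the two boundary rows, which carry the anomalous entries $2b$ in the top-right of the first block and the doubled coupling at the bottom. I would check the first row equation $a\,x_{1j} + 2b\,x_{2j} = \lambda_j x_{1j}$, i.e. $2 x_{2j} = \delta_j x_{1j}$; with $x_{1j}=T_0=1$ and $x_{2j}=T_1(\delta_j/2)=\delta_j/2$ this reads $2\cdot(\delta_j/2) = \delta_j$, which holds. The delicate row is the last one: the factor $\tfrac12$ attached to $x_{nj}$ in (8) is precisely what is needed to reconcile the bottom boundary condition, since the entry just above the $(n,n)$ position couples to a half-weighted component. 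I would substitute $x_{n-1,j} = T_{n-2}(\delta_j/2)$ and $x_{nj} = \tfrac12 T_{n-1}(\delta_j/2)$ into the penultimate and final row equations and confirm that both are satisfied; verifying this matching is the crux of the argument, because it is where the special structure of $A$ (the corner $2b$ entries) and the half-normalization of the last eigenvector component must exactly cancel. Finally I would note that distinctness of the $\lambda_k$ (equivalently of $\cos\frac{(k-1)\pi}{n-1}$ for $k=1,\ldots,n$) guarantees that these $n$ eigenvectors are linearly independent, so they genuinely diagonalize $A$.
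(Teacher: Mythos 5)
Your proposal is correct, and it reaches the eigenvalues by a genuinely different route than the paper. You write $A = aI + bQ$ and import the spectrum of $Q$ from the fact quoted in the introduction (reference [4]), so that $\lambda_k = a + b\mu_k$ falls out at once; the paper instead passes to $B=\tfrac{1}{b}A$, computes its characteristic polynomial through the three-term determinant recurrence, factors it as $D_n(\alpha)=(\alpha^2-4)U_{n-2}\!\left(\tfrac{\alpha}{2}\right)$, and reads the roots off that factorization. Your route is shorter and more transparent, at the cost of leaning on the cited spectrum of $Q$ rather than being self-contained. For the eigenvectors the two arguments are essentially the same computation run in opposite directions: the paper solves the homogeneous system $(\lambda_k I_n - A)x=0$ forward from the normalization $x_1=1$, while you verify the Chebyshev ansatz row by row using $T_{k+1}(t)+T_{k-1}(t)=2tT_k(t)$.

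One caution on the verification you defer to the end. The first $n-1$ row equations (including the two rows with the anomalous entries $2b$) hold as polynomial identities in $\delta_j$, but the final row, $x_{n-1,j}=\delta_j x_{nj}$, reduces to $T_{n-2}\!\left(\tfrac{\delta_j}{2}\right)=\tfrac{\delta_j}{2}T_{n-1}\!\left(\tfrac{\delta_j}{2}\right)$, which by the recurrence is equivalent to $T_{n}\!\left(\tfrac{\delta_j}{2}\right)=T_{n-2}\!\left(\tfrac{\delta_j}{2}\right)$. This is \emph{not} an identity in $\delta$; it holds precisely because $\tfrac{\delta_j}{2}=\cos\phi_j$ with $\phi_j=\tfrac{(j-1)\pi}{n-1}$, so that $\cos(n\phi_j)-\cos((n-2)\phi_j)=-2\sin((n-1)\phi_j)\sin(\phi_j)=0$. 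You correctly flag this row as the crux, but the write-up should make explicit that this is the one step where the eigenvalue formula (7) must be fed back in; for generic $\delta$ the ``confirm that both are satisfied'' step would fail. With that point supplied, your argument is complete, and your closing remark on the distinctness of the $\lambda_k$ (hence diagonalizability) is a small bonus the paper only asserts later.
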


\begin{proof}
\bigskip Let $B$ be as in the following $n\times n$ tridiagonal matrix:%
\begin{equation}
B:=\left[ 
\begin{array}{cccccc}
\frac{a}{b} & 2 &  &  &  &  \\ 
1 & \frac{a}{b} & 1 &  &  &  \\ 
& 1 & \frac{a}{b} & 1 &  &  \\ 
&  & \ddots & \ddots & \ddots &  \\ 
&  &  & 1 & \frac{a}{b} & 2 \\ 
&  &  &  & 1 & \frac{a}{b}%
\end{array}%
\right]  \label{9}
\end{equation}%
Then, the characteristic polynomials of $B$ are%
\begin{equation}
D_{n}(\alpha )=(\alpha ^{2}-4)P_{n-2}(\alpha )  \label{10}
\end{equation}%
where $\alpha =\lambda -\frac{a}{b}$ and%
\begin{equation}
P_{n}(\alpha )=\alpha P_{n-1}(\alpha )-P_{n-2}(\alpha )  \label{11}
\end{equation}%
with initial conditions $P_{0}(\alpha )=1,~P_{1}(\alpha )=\alpha
,~P_{2}(\alpha )=\alpha ^{2}-1.$

Solution of difference equation in $(11)$ is $P_{n}(\alpha )=U_{n}(\frac{%
\alpha }{2})\ $here $U_{n}(x)$ is Chebyshev polynomial of the second kind.
So the equality in $\left( 10\right) $ written as%
\begin{equation*}
D_{n}(\alpha )=(\alpha ^{2}-4)U_{n-2}(\tfrac{\alpha }{2}).
\end{equation*}%
Then, we have%
\begin{equation*}
\alpha _{k}=2\cos \left( \frac{(k-1)\pi }{n-1}\right) ,\ k=1,2,\ldots ,n.
\end{equation*}%
So, the eigenvalues of $A$ are%
\begin{equation*}
\lambda _{k}=a+2b\cos \left( \frac{(k-1)\pi }{n-1}\right) ,~~for~~\
k=1,2,\ldots ,n~.
\end{equation*}%
Components eigenvectors of the matrix $A$ are the solutions of the following
homogeneous linear equations system:%
\begin{equation}
(\lambda _{k}I_{n}-A)x=0  \label{12}
\end{equation}%
where $\lambda _{k}$ is the $k-$th eigenvalue of the matrix $A$ ($%
k=1,2,\ldots ,n)$. The following equations system $(12)$ written, we possess%
\begin{equation}
\left. 
\begin{array}{r}
(\lambda _{k}-a)x_{1}-2bx_{2}=0 \\ 
-bx_{1}+(\lambda _{k}-a)x_{2}-bx_{3}=0 \\ 
-bx_{2}+(\lambda _{k}-a)x_{3}-bx_{4}=0 \\ 
\vdots \ \ \ \  \\ 
-bx_{n-2}+(\lambda _{k}-a)x_{n-1}-2bx_{n}=0 \\ 
-bx_{n-1}+(\lambda _{k}-a)x_{n}=0%
\end{array}%
\right\}  \label{13}
\end{equation}%
Dividing all terms of the each equation in system $(13)$ by $b\neq 0$,
substituting $\delta _{j}=\frac{\lambda _{j}-a}{b}\left( j=1,2,\ldots
,n\right) .~$Since rank of the system is $n-1$; choosing $x_{1}=1$ and
solving the set of the system $(13)$ as regards \ $x_{1}$, 
\begin{equation*}
x_{kj}=\left\{ 
\begin{array}{l}
~~~T_{k-1}\left( \frac{\delta _{j}}{2}\right) ;~k=1,2,\ldots ,n-1 \\ 
~\frac{1}{2}T_{k-1}\left( \frac{\delta _{j}}{2}\right) ;~k=n%
\end{array}%
\right. j=1,2,\ldots ,n
\end{equation*}%
where $\delta _{j}=\frac{\lambda _{j}-a}{b}$ and $T_{k}(x)$ is Chebyshev
polynomial of the first kind.
\end{proof}

\begin{theorem}
\bigskip Let $A^{\dagger }$ be as in $(2)$. Then the eigenvalues and
eigenvectors of the matrix $A^{\dagger }$ are 
\begin{equation}
\lambda _{k}^{\dagger }=a-2b\cos \left( \frac{k\pi }{n+1}\right) ,\
k=1,2,\ldots ,n  \label{14}
\end{equation}%
and%
\begin{equation}
y_{kj}=r_{k-1}U_{k-1}\left( \frac{\psi _{j}}{2}\right) ;j,k=1,2,\ldots ,n
\label{15}
\end{equation}%
here $\psi _{j}=\frac{\lambda _{j}^{\dag }-a}{b},r_{k-1}=\left\{ 
\begin{array}{c}
~~1,k-1\equiv 0~or~1~(\func{mod}4) \\ 
-1,k-1\equiv 2~or~3~(\func{mod}4)%
\end{array}%
\right. $ and $U_{k}(x)$ is Chebyshev polynomial of the second kind.

\begin{proof}
Let%
\begin{equation}
B^{\dag }:=\left[ 
\begin{array}{cccccc}
\frac{a}{b} & 1 &  &  &  & 0 \\ 
1 & \frac{a}{b} & -1 &  &  &  \\ 
& -1 & \frac{a}{b} & 1 &  &  \\ 
&  & 1 & \frac{a}{b} & -1 &  \\ 
&  &  & -1 & \frac{a}{b} & \ddots \\ 
0 &  &  &  & \ddots & \ddots%
\end{array}%
\right] .  \label{16}
\end{equation}%
Let $Q^{\dag }$ be the following $n\times n$ tridiagonal matrix%
\begin{equation}
Q^{\dagger }:=\left[ 
\begin{array}{cccccc}
0 & 1 &  &  &  &  \\ 
1 & 0 & -1 &  &  &  \\ 
& -1 & 0 & 1 &  &  \\ 
&  & 1 & 0 & -1 &  \\ 
&  &  & -1 & 0 & \ddots \\ 
&  &  &  & \ddots & \ddots%
\end{array}%
\right] .  \label{17}
\end{equation}%
The eigenvalues of $Q^{\dag }$ are defined by the roots of the
characteristic equation%
\begin{equation*}
\left\vert Q^{\dag }-\theta I\right\vert =0.
\end{equation*}%
Let%
\begin{equation}
D_{n}^{\dag }\left( \theta \right) =\left\vert 
\begin{array}{cccccc}
\theta & 1 &  &  &  &  \\ 
1 & \theta & -1 &  &  &  \\ 
& -1 & \theta & 1 &  &  \\ 
&  & 1 & \theta & -1 &  \\ 
&  &  & -1 & \theta & \ddots \\ 
&  &  &  & \ddots & \ddots%
\end{array}%
\right\vert  \label{18}
\end{equation}%
and%
\begin{equation}
P_{n}(\theta )=\left\vert 
\begin{array}{cccccc}
\theta & 1 &  &  &  &  \\ 
1 & \theta & 1 &  &  &  \\ 
& 1 & \theta & 1 &  &  \\ 
&  & 1 & \theta & 1 &  \\ 
&  &  & 1 & \theta & \ddots \\ 
&  &  &  & \ddots & \ddots%
\end{array}%
\right\vert  \label{19}
\end{equation}%
here $\theta \in 
%TCIMACRO{\U{211d} }%
%BeginExpansion
\mathbb{R}
%EndExpansion
$. Then%
\begin{equation*}
\left\vert Q^{\dag }-\theta I\right\vert =D_{n}^{\dag }\left( \theta \right)
\end{equation*}%
and%
\begin{equation}
D_{n}^{\dag }\left( \theta \right) =P_{n}(\theta ).  \label{20}
\end{equation}%
Let us prove by $\left( 20\right) $ the inductive method. For the basis
step, we possess 
\begin{eqnarray*}
D_{1}^{\dag }\left( \theta \right) &=&\theta =P_{1}(\theta ) \\
D_{2}^{\dag }\left( \theta \right) &=&\theta ^{2}-1=P_{2}(\theta ) \\
D_{3}^{\dag }\left( \theta \right) &=&\theta ^{3}-2\theta =P_{3}(\theta ).
\end{eqnarray*}%
We suppose $D_{n-1}^{\dag }\left( \theta \right) =\varkappa =P_{n-1}(\theta
) $ and $D_{n}^{\dag }\left( \theta \right) =\varrho =P_{n}(\theta )$ for $%
n\geq 3$. The well known recurrence relations is 
\begin{equation}
\left\vert H\left( n\right) \right\vert =h_{n,n}\left\vert H\left(
n-1\right) \right\vert -h_{n-1,n}h_{n,n-1}\left\vert H\left( n-2\right)
\right\vert ~\left[ 15\right] .  \label{21}
\end{equation}%
From $\left( 21\right) $, if $n$ is a positive odd integer, since $%
h_{n,n}=\theta ,~h_{n-1,n}=-1,~h_{n,n-1}=-1,~D_{n-1}^{\dag }\left( \theta
\right) =\varkappa $ and $D_{n}^{\dag }\left( \theta \right) =\varrho ,$ 
\begin{equation*}
D_{n+1}^{\dag }\left( \theta \right) =\theta \varrho -\left( -1\right)
\left( -1\right) \varkappa =\theta \varrho -\varkappa ,
\end{equation*}%
if $n$ is a positive even integer, since $h_{n,n}=\theta ,~h_{n-1,n}=1$ and $%
h_{n,n-1}=1,$ 
\begin{equation*}
D_{n+1}^{\dag }\left( \theta \right) =\theta \varrho -\varkappa
\end{equation*}%
and since $h_{n,n}=\theta ,~h_{n-1,n}=1,~h_{n,n-1}=1,~P_{n-1}(\theta
)=\varkappa $ and $P_{n}(\theta )=\varrho ,$%
\begin{equation*}
P_{n+1}(\theta )=\theta \varrho -\varkappa .
\end{equation*}%
For $\forall n\in 
%TCIMACRO{\U{2124} }%
%BeginExpansion
\mathbb{Z}
%EndExpansion
,$ we have$~D_{n}^{\dag }\left( \theta \right) =P_{n}(\theta ).~$From $%
\left( 11\right) $, we obtain%
\begin{equation*}
D_{n}^{\dag }\left( \theta \right) =U_{n}\left( \frac{\theta }{2}\right) .
\end{equation*}%
Then the eigenvalues of the matrix $Q^{\dag }$ are%
\begin{equation*}
\theta _{k}=-2\cos \left( \frac{k\pi }{n+1}\right) ,\ k=1,2,\ldots ,n.
\end{equation*}%
The proof can be done easily for the matrix $A^{\dag }$ similar to Theorem 1.
\end{proof}
\end{theorem}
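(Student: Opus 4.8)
The plan is to peel off the diagonal and reduce the whole problem to the zero-diagonal, sign-alternating matrix $Q^{\dag}$ of $(17)$, using the affine identity $A^{\dag}=aI_{n}+bQ^{\dag}$. Under this identity an eigenpair $(\theta ,x)$ of $Q^{\dag}$ yields the eigenpair $(a+b\theta ,x)$ of $A^{\dag}$, so the eigenvalues $(14)$ will come out as $\lambda_{k}^{\dag}=a+b\theta_{k}$ with $\psi_{j}=\theta_{j}$, and the eigenvectors of $A^{\dag}$ are literally those of $Q^{\dag}$. Thus it suffices to diagonalize $Q^{\dag}$.

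The device I would build the argument on is a diagonal sign conjugation. Let $Q_{0}$ be the classical $n\times n$ tridiagonal matrix carrying $0$ on the main diagonal and $1$ on both off-diagonals; its eigenvalues $2\cos\!\big(\tfrac{k\pi}{n+1}\big)$ and its eigenvectors with components $U_{k-1}(\tfrac{\theta}{2})$ are standard and are recoverable from the recurrence $(11)$, whose solution is $U_{n}$, together with the root formula $(6)$. Writing $D=\mathrm{diag}(d_{1},\dots ,d_{n})$ with $d_{k}\in\{\pm1\}$, the $(k,k+1)$ entry of $DQ_{0}D^{-1}$ equals $d_{k}d_{k+1}$; matching this to the superdiagonal sign $(-1)^{k+1}$ of $Q^{\dag}$ forces $d_{k}d_{k+1}=(-1)^{k+1}$. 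Solving this telescoping relation from $d_{1}=1$ produces the period-four sequence $+,+,-,-,+,+,-,-,\dots$, that is $d_{k}=r_{k-1}$, so that $Q^{\dag}=DQ_{0}D^{-1}$.

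From the similarity two things follow at once. First, $Q^{\dag}$ and $Q_{0}$ share their spectrum, so the eigenvalue set is $\{2\cos(\tfrac{k\pi}{n+1})\}=\{-2\cos(\tfrac{k\pi}{n+1})\}$ and hence $\lambda_{k}^{\dag}=a-2b\cos(\tfrac{k\pi}{n+1})$, which is $(14)$. Second, eigenvectors transform by $y=Dx$, so the eigenvector of $Q^{\dag}$ at $\theta_{j}$ has components $y_{kj}=d_{k}U_{k-1}(\tfrac{\theta_{j}}{2})=r_{k-1}U_{k-1}(\tfrac{\psi_{j}}{2})$, which is $(15)$. As an alternative closer to Theorem 1, one may instead solve $(Q^{\dag}-\theta I)y=0$ directly: setting $y_{1}=1$ and running the interior three-term recurrence gives $y_{k}=r_{k-1}U_{k-1}(\tfrac{\theta}{2})$, whereupon the boundary rows collapse to the single condition $U_{n}(\tfrac{\theta}{2})=0$, reproducing the determinant identity $\det(\theta I-Q^{\dag})=U_{n}(\tfrac{\theta}{2})$ and selecting precisely the eigenvalues.

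The delicate point, and the one I expect to be the main obstacle, is the sign bookkeeping: one must check that a single sequence $r_{k-1}$ satisfies every interior recurrence \emph{and} both boundary equations simultaneously, and that its period is genuinely four rather than two. The conjugation viewpoint is what removes the guesswork, since once $D$ is exhibited as carrying $Q_{0}$ to $Q^{\dag}$ the eigenvectors are automatically correct and the period-four pattern is \emph{forced} by $d_{k}d_{k+1}=(-1)^{k+1}$ rather than verified case by case.
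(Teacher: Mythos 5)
Your proposal is correct, and it takes a genuinely different route from the paper. The paper works at the level of characteristic polynomials: it proves $D_{n}^{\dag }(\theta )=P_{n}(\theta )$ by induction using the three-term determinant recurrence $(21)$, the key observation being that the product $h_{n-1,n}h_{n,n-1}$ of the off-diagonal entries equals $(-1)(-1)=1$ or $(1)(1)=1$ in either parity case, so the alternating signs never enter the recurrence; this yields the eigenvalues, after which the eigenvector formula $(15)$ is simply asserted to follow ``similar to Theorem 1,'' i.e.\ by solving the homogeneous system row by row, and the origin of the period-four sign $r_{k-1}$ is never actually exhibited. You instead produce the explicit diagonal similarity $Q^{\dag }=DQ_{0}D^{-1}$ with $D=\mathrm{diag}(r_{0},r_{1},\dots ,r_{n-1})$, forced by $d_{k}d_{k+1}=(-1)^{k+1}$. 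This buys more than the paper's argument: the identity $D_{n}^{\dag }(\theta )=P_{n}(\theta )$ drops out of similarity invariance with no induction, the eigenvalues $(14)$ follow from the spectrum of $Q_{0}$ together with the symmetry of the set $\{2\cos (k\pi /(n+1))\}$ under negation (which you correctly flag, since the paper's indexing uses $-2\cos$), and --- most importantly --- the eigenvectors $(15)$ are obtained as $y=Dx$ rather than left to the reader, with the period-four pattern explained rather than verified. The one thing to make sure is fully spelled out if you write this up is the standard eigenstructure of $Q_{0}$ itself (components $U_{k-1}(\theta /2)$ from the recurrence $(11)$ with the boundary condition $U_{n}(\theta /2)=0$), which both you and the paper are entitled to cite; with that in place your argument is complete and, in fact, proves strictly more of the theorem than the printed proof does.
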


\section{\protect\bigskip \textbf{The integer powers of the matrices }$A$%
\textbf{\ and }$A^{\dagger }$}

Consider the relations $A=KJK^{-1}$ and $A^{\dag }=TJ^{\dag }T^{-1},$where $%
J $ and $J^{\dag }$ are the Jordan$^{\prime }$s forms of the matrices $A$
and $A^{\dag },~K$ and $T$\ are transforming matrices of the matrix $A$ and $%
A^{\dag }$, respectively. Since all the eigenvalues of $A$ and $A^{\dag }$
are simple, columns of the transforming matrices $K$ and $T$ are the
eigenvectors of the matrices $A$ and $A^{\dag },$ respectively $[13]$. Also
all\ eigenvalues$\ \lambda _{k}$ and $\lambda _{k}^{\dagger }$ corresponds
single Jordan cells $J_{i}(\ \lambda _{k})$ and $J_{i}^{\dag }(\ \lambda
_{k}^{\dag })$\ in the matrix $J$ and $J^{\dag },~$respectively$.$ Then, we
write down the Jordan$^{\prime }$s forms of the matrices $A$ and $A^{\dag }$%
\begin{equation}
J=diag(\lambda _{1},\lambda _{2},\lambda _{3},...,\lambda _{n})  \label{22}
\end{equation}%
and%
\begin{equation}
J^{\dag }=diag(\lambda _{1}^{\dag },\lambda _{2}^{\dag },\lambda _{3}^{\dag
},...,\lambda _{n}^{\dag }).  \label{23}
\end{equation}

From $\left( 8\right) $ and $\left( 15\right) $, we can write the
transforming matrices $K$ and $T$ \ as%
\begin{equation}
K=[x_{kj}]=\left\{ 
\begin{array}{l}
~~~T_{k-1}\left( \frac{\delta _{j}}{2}\right) ;~k=1,2,\ldots ,n-1 \\ 
\frac{1}{2}T_{k-1}\left( \frac{\delta _{j}}{2}\right) ~;~k=n%
\end{array}%
\right. j=1,2,\ldots ,n  \label{24}
\end{equation}%
and%
\begin{equation}
T=\left[ y_{kj}\right] =r_{k-1}U_{k-1}\left( \frac{\psi _{j}}{2}\right)
\label{25}
\end{equation}%
where $\delta _{j}=\frac{\lambda _{j}-a}{b},~$ $\psi _{j}=\frac{\lambda
_{j}^{\dag }-a}{b}$ and%
\begin{equation*}
r_{k-1}=\left\{ 
\begin{array}{c}
~~1,k-1\equiv 0~or~1~(\func{mod}4) \\ 
-1,k-1\equiv 2~or~3~(\func{mod}4)%
\end{array}%
\right. .
\end{equation*}%
Considering $\left( 24\right) $ and $\left( 25\right) $, we write down the
transforming matrices $K$ and $T$, respectively,%
\begin{equation}
K=\left[ 
\begin{array}{ccccc}
T_{0}\left( \frac{\delta _{1}}{2}\right) & T_{0}\left( \frac{\delta _{2}}{2}%
\right) & \cdots & T_{0}\left( \frac{\delta _{n-1}}{2}\right) & T_{0}\left( 
\frac{\delta _{n}}{2}\right) \\ 
T_{1}\left( \frac{\delta _{1}}{2}\right) & T_{1}\left( \frac{\delta _{2}}{2}%
\right) & \cdots & T_{1}\left( \frac{\delta _{n-1}}{2}\right) & T_{1}\left( 
\frac{\delta _{n}}{2}\right) \\ 
\vdots & \vdots & \ddots & \vdots & \vdots \\ 
T_{n-2}\left( \frac{\delta _{1}}{2}\right) & T_{n-2}\left( \frac{\delta _{2}%
}{2}\right) & \cdots & T_{n-2}\left( \frac{\delta _{n-1}}{2}\right) & 
T_{n-2}\left( \frac{\delta _{n}}{2}\right) \\ 
\frac{1}{2}T_{n-1}\left( \frac{\delta _{1}}{2}\right) & \frac{1}{2}%
T_{n-1}\left( \frac{\delta _{2}}{2}\right) & \cdots & \frac{1}{2}%
T_{n-1}\left( \frac{\delta _{n-1}}{2}\right) & \frac{1}{2}T_{n-1}\left( 
\frac{\delta _{n}}{2}\right)%
\end{array}%
\right]  \label{26}
\end{equation}%
and%
\begin{equation}
T=\left[ 
\begin{array}{cccc}
r_{0}U_{0}\left( \frac{\psi _{1}}{2}\right) & r_{0}U_{0}\left( \frac{\psi
_{2}}{2}\right) & \cdots & r_{0}U_{0}\left( \frac{\psi _{n}}{2}\right) \\ 
r_{1}U_{1}\left( \frac{\psi _{1}}{2}\right) & r_{1}U_{1}\left( \frac{\psi
_{2}}{2}\right) & \cdots & r_{1}U_{1}\left( \frac{\psi _{n}}{2}\right) \\ 
\vdots & \vdots & \ddots & \vdots \\ 
r_{n-2}U_{n-2}\left( \frac{\psi _{1}}{2}\right) & r_{n-2}U_{n-2}\left( \frac{%
\psi _{2}}{2}\right) & \cdots & r_{n-2}U_{n-2}\left( \frac{\psi _{n}}{2}%
\right) \\ 
r_{n-1}U_{n-1}\left( \frac{\psi _{1}}{2}\right) & r_{n-1}U_{n-1}\left( \frac{%
\psi _{2}}{2}\right) & \cdots & r_{n-1}U_{n-1}\left( \frac{\psi _{n}}{2}%
\right)%
\end{array}%
\right] .  \label{27}
\end{equation}

Denoting $~j$th column of the matrix $K^{-1}$ by $\tau _{j}$ and
implementing the necessary transformations, we have 
\begin{equation*}
\tau _{j}=\gamma _{j}\left[ 
\begin{array}{c}
\beta _{1}T_{j-1}(\frac{\delta _{1}}{2}) \\ 
\beta _{2}T_{j-1}(\frac{\delta _{2}}{2}) \\ 
\beta _{3}T_{j-1}(\frac{\delta _{3}}{2}) \\ 
\vdots \\ 
\beta _{n-1}T_{j-1}(\frac{\delta _{n-1}}{2}) \\ 
\beta _{n}T_{j-1}(\frac{\delta _{n}}{2})%
\end{array}%
\right] ,\ j=\overline{1,n}
\end{equation*}%
where $\gamma _{j}=\left\{ 
\begin{array}{l}
1,~j=1 \\ 
2,1<j\leq n%
\end{array}%
\right. ~$and~ $\beta _{k}=\frac{1}{2n-2}\left\{ 
\begin{array}{l}
1,~k=1,n \\ 
2,1<k<n%
\end{array}%
\right. .$

Let

\begin{equation*}
A^{s}=KJ^{s}K^{-1}=U(s)=(u_{ij}(s))
\end{equation*}%
here%
\begin{equation*}
s=\left\{ 
\begin{array}{l}
s\in 
%TCIMACRO{\U{2115} }%
%BeginExpansion
\mathbb{N}
%EndExpansion
,~~n~~odd \\ 
s\in 
%TCIMACRO{\U{2124} }%
%BeginExpansion
\mathbb{Z}
%EndExpansion
,~~n~~even.%
\end{array}%
\right.
\end{equation*}%
Thus

\begin{equation}
u_{ij}(s)=\gamma _{j}\sum_{k=1}^{n}\lambda _{k}^{s}\beta _{k}T_{i-1}\left( 
\frac{\delta _{k}}{2}\right) T_{j-1}\left( \frac{\delta _{k}}{2}\right)
\label{28}
\end{equation}%
where $i=1,2,\ldots ,n-1$; $j=1,2,\ldots ,n$ and

\begin{equation}
u_{ij}(s)=\frac{\gamma _{j}}{2}\sum_{k=1}^{n}\lambda _{k}^{s}\beta
_{k}T_{i-1}\left( \frac{\delta _{k}}{2}\right) T_{j-1}\left( \frac{\delta
_{k}}{2}\right)  \label{29}
\end{equation}%
for$~i=n$; $j=1,2,\ldots ,n.$

Firstly, we assumme that $n$ is positive odd integer $(n=2p+1,p\in 
%TCIMACRO{\U{2115} }%
%BeginExpansion
\mathbb{N}
%EndExpansion
).$

Denoting \ $j$th column of the inverse matrix $T^{-1}$ by $\sigma _{j~}$and
implementing necessary transformations, we have

\begin{equation*}
T_{j}^{-1}=\left[ 
\begin{array}{c}
\mu _{1}r_{j-1}U_{j-1}\left( \frac{\psi _{1}}{2}\right) \\ 
\mu _{2}r_{j-1}U_{j-1}\left( \frac{\psi _{2}}{2}\right) \\ 
\vdots \\ 
\mu _{n-1}r_{j-1}U_{j-1}\left( \frac{\psi _{n-1}}{2}\right) \\ 
\mu _{n}r_{j-1}U_{j-1}\left( \frac{\psi _{n}}{2}\right)%
\end{array}%
\right]
\end{equation*}%
where

\begin{equation*}
\mu _{k}=\left\{ 
\begin{array}{l}
~~\frac{1}{2n+2}\psi _{\frac{n+1}{2}+k}^{2},1\leq k\leq \frac{n-1}{2} \\ 
~~~~~\frac{2}{n+1}~~~~,~~~~k=\frac{n+1}{2} \\ 
\frac{1}{2n+2}\psi _{\frac{3(n+1)}{2}-k}^{2},\frac{n+3}{2}\leq k\leq n%
\end{array}%
k=1,2,\ldots ,n(n=2p+1,p\in 
%TCIMACRO{\U{2115} }%
%BeginExpansion
\mathbb{N}
%EndExpansion
).\right.
\end{equation*}

Let

\begin{equation*}
\left( A^{\dag }\right) ^{s}=T\left( J^{\dag }\right)
^{s}T^{-1}=W(s)=(w_{ij}(s))
\end{equation*}%
here $s\in 
%TCIMACRO{\U{2115} }%
%BeginExpansion
\mathbb{N}
%EndExpansion
~(n=2p+1,p\in 
%TCIMACRO{\U{2115} }%
%BeginExpansion
\mathbb{N}
%EndExpansion
).~$Hence

\begin{equation}
w_{ij}(s)=\sum_{k=1}^{n}\left( \lambda _{k}^{\dag }\right) ^{s}\mu
_{k}r_{i-1}r_{j-1}U_{i-1}\left( \frac{\psi _{k}}{2}\right) U_{j-1}\left( 
\frac{\psi _{k}}{2}\right)  \label{30}
\end{equation}%
where $i=1,2,\ldots ,n;~j=1,2,\ldots ,n.$

Secondly we assume that $n$ is positive even integer $(n=2p,p\in 
%TCIMACRO{\U{2115} }%
%BeginExpansion
\mathbb{N}
%EndExpansion
)~[8]$.

Denoting \ $j$th column of the inverse matrix $T^{-1}$ by $\sigma _{j}$ and
implementing necessary transformations, we obtain

\begin{equation*}
T_{j}^{-1}=\left[ 
\begin{array}{c}
\eta _{1}r_{j-1}U_{j-1}\left( \frac{\psi _{1}}{2}\right) \\ 
\eta _{2}r_{j-1}U_{j-1}\left( \frac{\psi _{2}}{2}\right) \\ 
\vdots \\ 
\eta _{n-1}r_{j-1}U_{j-1}\left( \frac{\psi _{n-1}}{2}\right) \\ 
\eta _{n}r_{j-1}U_{j-1}\left( \frac{\psi _{n}}{2}\right)%
\end{array}%
\right]
\end{equation*}%
where

\begin{equation*}
\eta _{k}=\frac{4-\psi _{k}^{2}}{2n+2},k=1,2,\ldots ,n(n=2p,p\in 
%TCIMACRO{\U{2115} }%
%BeginExpansion
\mathbb{N}
%EndExpansion
).
\end{equation*}

Let

\begin{equation*}
\left( A^{\dag }\right) ^{s}=T\left( J^{\dag }\right)
^{s}T^{-1}=L(s)=(l_{ij}(s))
\end{equation*}%
where $s\in 
%TCIMACRO{\U{2124} }%
%BeginExpansion
\mathbb{Z}
%EndExpansion
~(n=2p,p\in 
%TCIMACRO{\U{2115} }%
%BeginExpansion
\mathbb{N}
%EndExpansion
).~$So

\begin{equation}
l_{ij}(s)=\sum_{k=1}^{n}\left( \lambda _{k}^{\dag }\right) ^{s}\eta
_{k}r_{i-1}r_{j-1}U_{i-1}\left( \frac{\psi _{k}}{2}\right) U_{j-1}\left( 
\frac{\psi _{k}}{2}\right)  \label{31}
\end{equation}%
where $i=1,2,\ldots ,n;~j=1,2,\ldots ,n.$

\begin{corollary}
Let%
\begin{equation}
\tilde{A}^{\dag }=\left[ 
\begin{array}{cccccc}
0 &  &  &  & b & a \\ 
&  &  & -b & a & b \\ 
&  & b & a & -b &  \\ 
& -b & a & b &  &  \\ 
{\mathinner{\mkern2mu\raise1pt\hbox{.}\mkern2mu
\raise4pt\hbox{.}\mkern2mu\raise7pt\hbox{.}\mkern1mu}} & a & -b &  &  &  \\ 
{\mathinner{\mkern2mu\raise1pt\hbox{.}\mkern2mu
\raise4pt\hbox{.}\mkern2mu\raise7pt\hbox{.}\mkern1mu}} & {%
\mathinner{\mkern2mu\raise1pt\hbox{.}\mkern2mu
\raise4pt\hbox{.}\mkern2mu\raise7pt\hbox{.}\mkern1mu}} &  &  &  & 0%
\end{array}%
\right]  \label{32}
\end{equation}%
be the anti-tridiagonal matrices, where $0\neq b,a\in 
%TCIMACRO{\U{2102} }%
%BeginExpansion
\mathbb{C}
%EndExpansion
$.
\end{corollary}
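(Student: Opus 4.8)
The statement to be proved is that the anti-tridiagonal matrix $\tilde{A}^{\dag}$ of $(32)$ inherits its spectrum and its integer powers from the tridiagonal matrix $A^{\dag}$ of $(2)$ via Theorem~2. The plan is to realize $\tilde{A}^{\dag}$ as the column reversal of $A^{\dag}$ and then transport the eigenstructure. Let $E=[e_{ij}]$ be the $n\times n$ exchange matrix, $e_{ij}=1$ if $i+j=n+1$ and $e_{ij}=0$ otherwise, so that $E^{2}=I_{n}$ and right multiplication by $E$ reverses columns. The $(i,j)$ entry of $A^{\dag}E$ is $(A^{\dag})_{i,n+1-j}$, and a short check of the three nonzero bands shows this coincides with $(32)$; hence
\begin{equation*}
\tilde{A}^{\dag}=A^{\dag}E .
\end{equation*}
First I would record the reflection symmetry of $A^{\dag}$ under $E$. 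Writing its superdiagonal entries as $(A^{\dag})_{k,k+1}=(-1)^{k-1}b$, one computes $(EA^{\dag}E)_{k,k+1}=(-1)^{n}(A^{\dag})_{k,k+1}$, so $EA^{\dag}E=A^{\dag}$ exactly when $n$ is even. Thus for $n=2p$ the matrix $\tilde{A}^{\dag}$ is complex symmetric and $E$ commutes with $A^{\dag}$.

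Next I would transport the eigenvectors. Fix the eigenvalue $\lambda_{j}^{\dag}=a-2b\cos\!\big(\tfrac{j\pi}{n+1}\big)$ of $(14)$ and set $\psi_{j}/2=\cos\theta_{j}$ with $\theta_{j}=\tfrac{(n+1-j)\pi}{n+1}$, so that the $j$th eigenvector of $(15)$ has entries $y_{kj}=r_{k-1}\sin(k\theta_{j})/\sin\theta_{j}$. Applying $E$ reverses the component index, and the identity
\begin{equation*}
\sin\!\big((n+1-k)\theta_{j}\big)=(-1)^{\,n-j}\sin(k\theta_{j}),
\end{equation*}
which follows from $(n+1)\theta_{j}=(n+1-j)\pi$, yields $(Ey_{\cdot j})_{k}=(-1)^{\,n-j}\,r_{n-k}r_{k-1}\,y_{kj}$. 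Writing $r_{m}=(-1)^{\lfloor m/2\rfloor}$ and using $\lceil k/2\rceil-\lfloor(k-1)/2\rfloor=1$ for every $k$, the product $r_{n-k}r_{k-1}=(-1)^{\,p-1}$ is constant in $k$ when $n=2p$. Hence $Ey_{\cdot j}=\varepsilon_{j}y_{\cdot j}$ with $\varepsilon_{j}=(-1)^{\,n-j+p-1}\in\{-1,+1\}$, and therefore
\begin{equation*}
\tilde{A}^{\dag}y_{\cdot j}=A^{\dag}Ey_{\cdot j}=\varepsilon_{j}A^{\dag}y_{\cdot j}=\varepsilon_{j}\lambda_{j}^{\dag}\,y_{\cdot j}.
\end{equation*}
This identifies the eigenvalues of $\tilde{A}^{\dag}$ as the signed numbers $\varepsilon_{j}\lambda_{j}^{\dag}$ with the very eigenvectors $(15)$, and, since $EA^{\dag}E=A^{\dag}$, it delivers the powers through the even/odd split $(\tilde{A}^{\dag})^{2m}=(A^{\dag})^{2m}$ and $(\tilde{A}^{\dag})^{2m+1}=(A^{\dag})^{2m+1}E$, whose entries are read directly from $(31)$, the odd case being the column reversal $j\mapsto n+1-j$ of that formula.

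The step I expect to be the main obstacle is the constancy of $r_{n-k}r_{k-1}$ in $k$: this is exactly where the Chebyshev reflection sign $(-1)^{\,n-j}$ and the period-four pattern of $r_{k-1}$ must collapse into a single $k$-independent sign. The same computation shows that for $n$ odd the product alternates between $(-1)^{p}$ and $(-1)^{p-1}$, so the eigenvectors of $A^{\dag}$ are no longer $E$-eigenvectors; the odd case would require a separate diagonalization rather than this transport argument, which is why the clean statement is confined to $n$ even, in agreement with the $n=2p$, $s\in\mathbb{Z}$ regime of $(31)$.
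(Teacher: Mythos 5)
You were handed a statement that, as printed, is not actually a proposition: ``Corollary 4'' only \emph{defines} the anti-tridiagonal matrix $\tilde{A}^{\dag }$ of (32), and the paper offers no proof of it; the mathematical content you have reconstructed lives in the surrounding Lemma 5 ($\tilde{A}^{\dag }=\tilde{J}^{\dag }A^{\dag }=A^{\dag }\tilde{J}^{\dag }$, which the paper proves only by citing [11]), Lemma 6 (the even/odd power split) and Theorem 7 (the entry formulas). Your argument is a correct, self-contained proof of exactly those results, and it follows the same underlying route: your exchange matrix $E$ is the paper's $\tilde{J}^{\dag }$, your factorization $\tilde{A}^{\dag }=A^{\dag }E$ together with $EA^{\dag }E=A^{\dag }$ for $n$ even is Lemma 5, the split $(\tilde{A}^{\dag })^{2m}=(A^{\dag })^{2m}$ and $(\tilde{A}^{\dag })^{2m+1}=(A^{\dag })^{2m+1}E$ is Lemma 6, and reading the odd case off (31) by reversing one index is Theorem 7 (the paper reverses rows via $\tilde{J}^{\dag }(A^{\dag })^{s}$, you reverse columns; these agree because $(A^{\dag })^{s}$ is symmetric and commutes with $E$). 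What your write-up adds beyond the paper: you actually verify the commutation via the band computation $(EA^{\dag }E)_{k,k+1}=(-1)^{n}(A^{\dag })_{k,k+1}$, which makes explicit why the hypothesis $n=2p$ is indispensable rather than deferring to [11]; and you push further to show that the eigenvectors (15) are simultaneously $E$-eigenvectors, with the sign bookkeeping $r_{m}=(-1)^{\lfloor m/2\rfloor }$ and $r_{n-k}r_{k-1}=(-1)^{p-1}$ collapsing correctly for $n=2p$, so that $\tilde{A}^{\dag }$ has eigenvalues $\varepsilon _{j}\lambda _{j}^{\dag }$ with $\varepsilon _{j}=(-1)^{n-j+p-1}$ --- a clean spectral statement the abstract promises but the paper never writes down. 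I find no gap in your computations; the only caution is editorial, namely that you have proved the neighboring lemmas and theorem rather than the labelled ``corollary,'' which has nothing to prove.
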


\begin{lemma}
Let \ $0\neq b,a\in 
%TCIMACRO{\U{2102} }%
%BeginExpansion
\mathbb{C}
%EndExpansion
,~n=2p,~p\in 
%TCIMACRO{\U{2115} }%
%BeginExpansion
\mathbb{N}
%EndExpansion
$, $A^{\dag }$ and 
\begin{equation}
\tilde{J}^{\dag }=\left[ 
\begin{array}{ccccc}
0 &  &  &  & 1 \\ 
&  &  & 1 &  \\ 
&  & {\mathinner{\mkern2mu\raise1pt\hbox{.}\mkern2mu
\raise4pt\hbox{.}\mkern2mu\raise7pt\hbox{.}\mkern1mu}} &  &  \\ 
& 1 &  &  &  \\ 
1 &  &  &  & 0%
\end{array}%
\right] .  \label{33}
\end{equation}%
Then%
\begin{equation}
\tilde{A}^{\dag }=\tilde{J}^{\dag }A^{\dag }=A^{\dag }\tilde{J}^{\dag }.
\label{34}
\end{equation}
\end{lemma}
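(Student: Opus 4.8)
The plan is to recognize $\tilde{J}^{\dag }$ as the $n\times n$ reversal (exchange) matrix, so that left-multiplication reverses the order of the rows and right-multiplication reverses the order of the columns of $A^{\dag }$, and then to read off the entries on both sides. Writing $(\tilde{J}^{\dag })_{ij}=\delta _{i+j,\,n+1}$ one has $(\tilde{J}^{\dag })^{2}=I$, and for any matrix $M$,
\begin{equation*}
(\tilde{J}^{\dag }M)_{ij}=M_{n+1-i,\,j},\qquad (M\tilde{J}^{\dag })_{ij}=M_{i,\,n+1-j}.
\end{equation*}
I record the entries of $A^{\dag }$ from $(2)$: the diagonal is $(A^{\dag })_{ii}=a$, the off-diagonal entries with $|i-j|=1$ are $(A^{\dag })_{ij}=(-1)^{\min (i,j)-1}b$, and all other entries vanish; in particular $A^{\dag }$ is symmetric.

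First I would prove the second equality in $(34)$, namely that $A^{\dag }$ commutes with $\tilde{J}^{\dag }$. Since $(\tilde{J}^{\dag })^{2}=I$, the identity $\tilde{J}^{\dag }A^{\dag }=A^{\dag }\tilde{J}^{\dag }$ is equivalent to $\tilde{J}^{\dag }A^{\dag }\tilde{J}^{\dag }=A^{\dag }$, i.e.\ to the centrosymmetry relation $(A^{\dag })_{ij}=(A^{\dag })_{n+1-i,\,n+1-j}$. On the diagonal this is immediate since every diagonal entry equals $a$. For an off-diagonal entry joining the indices $k$ and $k+1$ (value $(-1)^{k-1}b$), the reflection $(i,j)\mapsto (n+1-i,\,n+1-j)$ sends it to the off-diagonal entry joining $n-k$ and $n+1-k$, whose value is $(-1)^{\,n-k-1}b$. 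These agree for every $k$ precisely when $(-1)^{k-1}=(-1)^{\,n-k-1}$, that is, when $n$ is even; this is exactly the hypothesis $n=2p$. Hence $A^{\dag }$ is centrosymmetric and $\tilde{J}^{\dag }A^{\dag }=A^{\dag }\tilde{J}^{\dag }$.

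Next I would identify $\tilde{J}^{\dag }A^{\dag }$ with $\tilde{A}^{\dag }$ by comparing entries. Using the row-reversal formula, $(\tilde{J}^{\dag }A^{\dag })_{ij}=(A^{\dag })_{n+1-i,\,j}$, which is nonzero only when $|(n+1-i)-j|\le 1$, i.e.\ when $j\in \{\,n-i,\ n+1-i,\ n+2-i\,\}$; these are exactly the three anti-diagonals carrying the nonzero entries of $\tilde{A}^{\dag }$ in $(32)$. On the main anti-diagonal $j=n+1-i$ one gets $(A^{\dag })_{n+1-i,\,n+1-i}=a$; on the neighbouring anti-diagonals $j=n-i$ and $j=n+2-i$ one gets the off-diagonal values $(-1)^{\,n-i-1}b$ and $(-1)^{\,n-i}b$ respectively, which, since $n$ is even, reduce to $(-1)^{i-1}b$ and $(-1)^{i}b$. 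A direct check against $(32)$ shows these are precisely the displayed entries of $\tilde{A}^{\dag }$, so $\tilde{J}^{\dag }A^{\dag }=\tilde{A}^{\dag }$. Combining this with the commutation already proved yields the full chain $\tilde{A}^{\dag }=\tilde{J}^{\dag }A^{\dag }=A^{\dag }\tilde{J}^{\dag }$.

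The only real obstacle is the bookkeeping of the alternating sign $(-1)^{\min (i,j)-1}$ as the index reflection $i\mapsto n+1-i$ is applied; everything hinges on the parity of $n$, and the computation makes transparent why the statement needs $n$ even: for odd $n$ the reflected off-diagonal signs are reversed, $A^{\dag }$ fails to be centrosymmetric, and the two products $\tilde{J}^{\dag }A^{\dag }$ and $A^{\dag }\tilde{J}^{\dag }$ differ (and neither equals $\tilde{A}^{\dag }$).
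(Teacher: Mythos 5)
Your proof is correct, and it is worth noting that the paper itself does not prove this lemma at all: its entire ``proof'' is the citation \emph{See [11]}, deferring to Wang's work on persymmetric anti-tridiagonal matrices. You instead give a complete, self-contained verification: identifying $\tilde{J}^{\dag}$ as the exchange matrix with $(\tilde{J}^{\dag})_{ij}=\delta_{i+j,n+1}$, encoding the off-diagonal entries of $A^{\dag}$ as $(-1)^{\min(i,j)-1}b$, reducing the commutation $\tilde{J}^{\dag}A^{\dag}=A^{\dag}\tilde{J}^{\dag}$ to the centrosymmetry relation $(A^{\dag})_{ij}=(A^{\dag})_{n+1-i,\,n+1-j}$ via $(\tilde{J}^{\dag})^{2}=I$, and then matching the row-reversed entries $(A^{\dag})_{n+1-i,\,j}$ against the three nonzero anti-diagonals of $\tilde{A}^{\dag}$ in $(32)$. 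I checked the sign bookkeeping on each anti-diagonal ($a$ on $j=n+1-i$, $(-1)^{i-1}b$ on $j=n-i$, $(-1)^{i}b$ on $j=n+2-i$) against the display in $(32)$, and it is consistent. Your argument has the additional merit of making explicit where the hypothesis $n=2p$ enters --- the parity condition $(-1)^{k-1}=(-1)^{n-k-1}$ --- which the paper's citation leaves entirely opaque. What the paper's approach ``buys'' is only brevity; your approach buys a verifiable proof and an explanation of the evenness hypothesis.
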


\begin{proof}
See [11].
\end{proof}

\begin{lemma}
\bigskip Let $\tilde{A}^{\dag }$ be as in ($32$). Then 
\begin{equation}
\left( \tilde{A}^{\dag }\right) ^{s}=\left\{ 
\begin{array}{l}
~~~~\left( A^{\dag }\right) ^{s},~~s~is~even \\ 
\tilde{J}^{\dag }\left( A^{\dag }\right) ^{s},~~s~is~odd.%
\end{array}%
\right.  \label{35}
\end{equation}
\end{lemma}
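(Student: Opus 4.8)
The plan is to exploit the two identities packaged together in the preceding lemma, equation (34): that $\tilde{A}^{\dag}=\tilde{J}^{\dag}A^{\dag}$ and simultaneously $\tilde{A}^{\dag}=A^{\dag}\tilde{J}^{\dag}$. Reading these together shows at once that $\tilde{J}^{\dag}$ and $A^{\dag}$ commute, since $\tilde{J}^{\dag}A^{\dag}=A^{\dag}\tilde{J}^{\dag}$. This commutativity is the structural fact that lets a power of the product factor cleanly, and it is the engine of the whole argument.

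First I would record the elementary property that $\tilde{J}^{\dag}$ is an involution, $(\tilde{J}^{\dag})^{2}=I_{n}$. This is immediate from (33): $\tilde{J}^{\dag}$ is the reversal (backward identity) matrix carrying $1$'s on the anti-diagonal and zeros elsewhere, and reversing the order of the coordinates twice restores every coordinate to its original place. Consequently $(\tilde{J}^{\dag})^{s}=I_{n}$ when $s$ is even and $(\tilde{J}^{\dag})^{s}=\tilde{J}^{\dag}$ when $s$ is odd.

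Next I would compute the $s$-th power directly. Starting from $\tilde{A}^{\dag}=\tilde{J}^{\dag}A^{\dag}$ and using the commutativity just established to slide every copy of $\tilde{J}^{\dag}$ to the left (a one-line induction on $s$), I obtain the factorization $(\tilde{A}^{\dag})^{s}=(\tilde{J}^{\dag}A^{\dag})^{s}=(\tilde{J}^{\dag})^{s}(A^{\dag})^{s}$. Substituting the two cases of $(\tilde{J}^{\dag})^{s}$ from the previous step then yields precisely (35): for even $s$ the factor $(\tilde{J}^{\dag})^{s}$ collapses to $I_{n}$, leaving $(A^{\dag})^{s}$, while for odd $s$ a single $\tilde{J}^{\dag}$ survives in front, giving $\tilde{J}^{\dag}(A^{\dag})^{s}$.

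I expect no genuine obstacle: the argument rests entirely on the commutativity handed to us by (34) together with the involution property of the reversal matrix. The only point needing minor care is the range of $s$. When $n$ is even the claim is meant for all $s\in\mathbb{Z}$, so negative exponents appear; there I would note that $A^{\dag}$ is invertible whenever its eigenvalues $\lambda_{k}^{\dag}=a-2b\cos(k\pi/(n+1))$ are all nonzero, and then extend the factorization to negative $s$ via $(\tilde{A}^{\dag})^{-1}=(A^{\dag})^{-1}\tilde{J}^{\dag}$, after which the parity bookkeeping is unchanged.
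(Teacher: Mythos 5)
Your proposal is correct and follows essentially the same route as the paper: both arguments rest on the identity $(34)$, i.e. $\tilde{J}^{\dag }A^{\dag }=A^{\dag }\tilde{J}^{\dag }$, together with the involution property $\left( \tilde{J}^{\dag }\right) ^{2}=I_{n}$ and a parity split on $s$. The paper writes this as an explicit induction on $s$, whereas you package the same induction as the factorization $\left( \tilde{J}^{\dag }A^{\dag }\right) ^{s}=\left( \tilde{J}^{\dag }\right) ^{s}\left( A^{\dag }\right) ^{s}$ and, unlike the paper, you also address the extension to negative exponents when $A^{\dag }$ is invertible.
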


\begin{proof}
We will use by induction on $s$. The case $s=1$ is explicit. Assume that the
equality ($35$) is true for$~s>1$. Now let us show the equality ($35$) is
true for $s+1$. By the induction hypothesis we possess%
\begin{equation*}
\left( \tilde{A}^{\dag }\right) ^{s+1}=\left\{ 
\begin{array}{l}
\tilde{A}^{\dag }\tilde{J}^{\dag }\left( A^{\dag }\right) ^{s}~,~~s+1~is~even
\\ 
~~~~\tilde{A}^{\dag }\left( A^{\dag }\right) ^{s},~~s+1~is~odd.%
\end{array}%
\right.
\end{equation*}%
Since $~\tilde{A}^{\dag }=A^{\dag }\tilde{J}^{\dag }$, we get 
\begin{equation*}
\left( \tilde{A}^{\dag }\right) ^{s+1}=\left\{ 
\begin{array}{c}
~~\left( A^{\dag }\right) ^{s+1}~,~~s+1~is~even \\ 
\tilde{J}^{\dag }\left( A^{\dag }\right) ^{s+1},~~s+1~is~odd.%
\end{array}%
\right.
\end{equation*}
\end{proof}

\begin{theorem}
Let $\tilde{A}^{\dag }$ be $n-$square complex anti-tridiagonal matrix in ($%
32 $). If $s$ is odd, then the $s-$th power of \bigskip $\tilde{A}^{\dag }$
is%
\begin{equation}
\chi _{n-i+1,j}^{s}=\sum_{k=1}^{n}\left( \lambda _{k}^{\dag }\right)
^{s}\eta _{k}r_{n-i}r_{j-1}U_{n-i}\left( \frac{\psi _{k}}{2}\right)
U_{j-1}\left( \frac{\psi _{k}}{2}\right)  \label{36}
\end{equation}%
and if $s$ is even, then the $s-$th power of $\tilde{A}^{\dag }$ is%
\begin{equation}
\chi _{i,j}^{s}=\sum_{k=1}^{n}\left( \lambda _{k}^{\dag }\right) ^{s}\eta
_{k}r_{i-1}r_{j-1}U_{i-1}\left( \frac{\psi _{k}}{2}\right) U_{j-1}\left( 
\frac{\psi _{k}}{2}\right)  \label{37}
\end{equation}%
for \bigskip\ $i=1,2,\ldots ,n;~j=1,2,\ldots ,n.$
\end{theorem}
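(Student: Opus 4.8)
The plan is to deduce the statement from the two preceding lemmas together with the closed form (31) for the entries $l_{ij}(s)$ of $(A^{\dag})^{s}$, which is the form valid in the even case $n=2p$ relevant here. By the lemma yielding (35), the power $(\tilde{A}^{\dag})^{s}$ is either $(A^{\dag})^{s}$ or $\tilde{J}^{\dag}(A^{\dag})^{s}$ according to the parity of $s$; hence the computation splits cleanly into the two parities, and no new diagonalization, spectral data, or inversion of a transforming matrix is required beyond what (31) already encodes.

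For even $s$ the argument is immediate. Lemma (35) gives $(\tilde{A}^{\dag})^{s}=(A^{\dag})^{s}$, so the $(i,j)$ entry $\chi_{i,j}^{s}$ equals $l_{ij}(s)$, and substituting (31) reproduces (37) verbatim. There is nothing to do here beyond invoking (31).

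For odd $s$ the whole content is the row reversal induced by $\tilde{J}^{\dag}$. First I would record that $\tilde{J}^{\dag}$ is the exchange (backward identity) matrix, with nonzero entries $(\tilde{J}^{\dag})_{i,\,n-i+1}=1$, so that left multiplication acts by $(\tilde{J}^{\dag}M)_{m,j}=M_{n-m+1,\,j}$, i.e.\ it reverses the row order of $M$. Applying this with $M=(A^{\dag})^{s}$ and using (35), the entry of $(\tilde{A}^{\dag})^{s}$ sitting in row $n-i+1$ and column $j$ is $l_{i,j}(s)$, equivalently the entry in row $i$ is $l_{n-i+1,\,j}(s)$. Feeding (31) into this and pushing the reflected row index through the two index-dependent factors, namely the Chebyshev degree $U_{\cdot}$ and the sign $r_{\cdot}$, turns the subscript into $n-i$ and delivers (36). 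The key move is thus to follow how the pair $\bigl(r_{\cdot},U_{\cdot}\bigr)$ transforms under the reflection $p\mapsto n-p+1$ of the row index.

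I expect the only genuinely delicate point to be this index bookkeeping in the odd case. One must stay consistent about whether the left subscript of $\chi^{s}$ labels the actual row of $(\tilde{A}^{\dag})^{s}$ or the row of $(A^{\dag})^{s}$ from which it is pulled back, because the reflection $p\mapsto n-p+1$ shifts the degree of $U$ and the period-four sign pattern $r$ simultaneously; a slip here interchanges $r_{i-1},U_{i-1}$ with $r_{n-i},U_{n-i}$ and misplaces the entry. Once that is pinned down the result is a bookkeeping corollary of (31), (34), and (35), with no analytic obstacle remaining.
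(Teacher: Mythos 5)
Your proposal is correct and follows essentially the same route as the paper: split on the parity of $s$ via (35), quote (31) directly for even $s$, and for odd $s$ observe that left multiplication by the exchange matrix $\tilde{J}^{\dag}$ reverses the row index, so $\chi_{i,j}^{s}=l_{n-i+1,j}(s)$. You are also right that the only delicate point is the row-index bookkeeping, and indeed the paper's own statement of (36) pairs $\chi_{n-i+1,j}^{s}$ with the factors $r_{n-i},U_{n-i}$ (i.e.\ with $l_{n-i+1,j}(s)$ rather than $l_{i,j}(s)$), which is exactly the slip you warn against; your version $\chi_{i,j}^{s}=l_{n-i+1,j}(s)$ is the consistent one.
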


\begin{proof}
Let $\left( \tilde{A}^{\dag }\right) ^{s}=\left( \chi _{ij}^{s}\right) $ and 
$L=A^{\dag }.$ We obtain the eigenvalues of $A^{\dag }$ as $\left( 14\right) 
$ and the entries of the matrix $L$ as $\left( 31\right) $. Let $s$ be odd
interger. If we multiply the equality $\left( 31\right) $ by $\tilde{J}%
^{\dag }$ from left side, then we possess%
\begin{equation*}
\left( \tilde{J}^{\dag }A^{\dag }\right) _{i,k}=\underset{r=1}{\overset{n}{%
\sum }}\left( \tilde{J}^{\dag }\right) _{i,r}l_{r,k}(s)=l_{n-i+1,k}\left(
s\right) ;~k=1,\ldots ,n.
\end{equation*}%
Therefore we get 
\begin{eqnarray*}
\chi _{n-i+1,j}^{s} &=&l_{n-i+1,k}\left( s\right) \\
&=&\sum_{k=1}^{n}\left( \lambda _{k}^{\dag }\right) ^{s}\eta
_{k}r_{n-i}r_{j-1}U_{n-i}\left( \frac{\psi _{k}}{2}\right) U_{j-1}\left( 
\frac{\psi _{k}}{2}\right) ;~i,j=\overline{1,n}.
\end{eqnarray*}%
Let $s$ be even, then the equality $\left( 31\right) $ is valid by the
equality $\left( 35\right) $.
\end{proof}

\section{\textbf{Numerical \ examples}}

\begin{example}
If $s=3$ and $~n=3.$ Then%
\begin{equation*}
A=\left[ 
\begin{array}{ccc}
a & 2b & 0 \\ 
b & a & 2b \\ 
0 & b & a%
\end{array}%
\right] .
\end{equation*}%
We have%
\begin{equation*}
J=diag(\lambda _{1},\lambda _{2},\lambda _{3})=diag(a+2b,a,a-2b)
\end{equation*}%
and%
\begin{equation*}
A^{3}=(u_{ij}(s))=(u_{ij}(3))=\left[ 
\begin{array}{ccc}
u_{11} & u_{12} & u_{13} \\ 
u_{21} & u_{22} & u_{23} \\ 
u_{31} & u_{32} & u_{33}%
\end{array}%
\right] =\left[ 
\begin{array}{rrr}
x & 2y & 4z \\ 
y & q & 2y \\ 
z & y & x%
\end{array}%
\right] ;
\end{equation*}%
\begin{equation*}
x=a^{3}+6ab^{2},~~y=3a^{2}b+4b^{3},~~z=3ab^{2},~~q=a^{3}+12ab^{2}.
\end{equation*}
\end{example}

\begin{example}
$s=-3,~~n=4,~a=1$ and $b=2$. Then, we get%
\begin{eqnarray*}
J &=&diag(\lambda _{1},\lambda _{2},\lambda _{3},\lambda
_{4})=diag(a+2b,a+b,a-b,a-2b) \\
&=&diag\left( 5,3,-1,-3\right)
\end{eqnarray*}%
and%
\begin{equation*}
A^{-3}=(u_{ij}(-3))=\frac{1}{1000}\left[ 
\begin{array}{rrrr}
-326 & 361 & 311 & -676 \\ 
180 & -170 & -158 & 311 \\ 
156 & -158 & -170 & 361 \\ 
-169 & 156 & 180 & -326%
\end{array}%
\right] .
\end{equation*}
\end{example}

\begin{example}
If$~s=4,~n=3$. Then 
\begin{equation*}
A^{\dag }=\left[ 
\begin{array}{rrr}
a & b & 0 \\ 
b & a & -b \\ 
0 & -b & a%
\end{array}%
\right] .
\end{equation*}%
We achieve%
\begin{equation*}
J^{\dag }=diag(\lambda _{1},\lambda _{2},\lambda _{3})=diag(a-b\sqrt[~]{2}%
,a,a+b\sqrt[~]{2})
\end{equation*}%
and%
\begin{equation*}
\left( A^{\dag }\right) ^{4}=w_{ij}(4)=\left[ 
\begin{array}{rrr}
w_{11} & w_{12} & w_{13} \\ 
w_{21} & w_{22} & w_{23} \\ 
w_{31} & w_{32} & w_{33}%
\end{array}%
\right] =\left[ 
\begin{array}{rrr}
x^{\dag } & y^{\dag } & z^{\dag } \\ 
y^{\dag } & q^{\dag } & -y^{\dag } \\ 
z^{\dag } & -y^{\dag } & x^{\dag }%
\end{array}%
\right] ;
\end{equation*}%
\begin{equation*}
x^{\dag }=a^{4}+6a^{2}b^{2}+2b^{4},~~y^{\dag }=4a^{3}b+8ab^{3},~~z^{\dag
}=-6a^{2}b^{2}-2b^{4},~~q^{\dag }=a^{4}+6a^{2}b^{2}+2b^{4}.
\end{equation*}
\end{example}

\begin{example}
If $~n=4,~s=4,~a=1~$and $b=4$, then 
\begin{eqnarray*}
J^{\dag } &=&diag(\lambda _{1},\lambda _{2},\lambda _{3},\lambda
_{4})=diag(a-\frac{b}{2}(1+\sqrt[~]{5}),a-\frac{b}{2}(\sqrt[~]{5}-1),a-\frac{%
b}{2}(1-\sqrt[~]{5}),a+\frac{b}{2}(1+\sqrt[~]{5})) \\
&=&diag(-1-2\sqrt[~]{5},3-2\sqrt[~]{5},-1+2\sqrt[~]{5},3+2\sqrt[~]{5}).
\end{eqnarray*}%
Therefore

\begin{equation*}
\left( A^{\dag }\right) ^{4}=l_{ij}(4)=\left[ 
\begin{array}{rrrr}
609 & 528 & -864 & -256 \\ 
528 & 1473 & -784 & -864 \\ 
-864 & -784 & 1473 & 528 \\ 
-256 & -864 & 528 & 609%
\end{array}%
\right] .
\end{equation*}
\end{example}

\begin{example}
If $~n=4,~s=-5,~a=i~~$and $b=1$, then%
\begin{equation*}
J^{\dag }=diag(\lambda _{1},\lambda _{2},\lambda _{3},\lambda _{4})=diag(i-%
\frac{1}{2}(1+\sqrt[~]{5}),i-\frac{1}{2}(\sqrt[~]{5}-1),i-\frac{1}{2}(1-\sqrt%
[~]{5}),i+\frac{1}{2}(1+\sqrt[~]{5})).
\end{equation*}%
So%
\begin{equation*}
\left( A^{\dag }\right) ^{-5}=l_{ij}(-5)=\frac{1}{1000}\left[ 
\begin{array}{rrrr}
296i & 56~ & 192i & 128~ \\ 
56~ & 104i & 72~ & 192i \\ 
192i & 72~ & 104i & 56~ \\ 
128~ & 192i & 56~ & 296i%
\end{array}%
\right] .
\end{equation*}
\end{example}

\section{Complex Factorizations}

The well-known Fibonacci polynomials $F(x)=\left\{ F_{n}(x)\right\}
_{n=1}^{\infty }$ are defined in $[12]$ by the recurrence relation

\begin{equation}
F_{n}(x)=xF_{n-1}(x)+F_{n-2}(x)  \label{38}
\end{equation}%
where $F_{0}(x)=0$ , $F_{1}(x)=1$ and $n\geq 3$.

\begin{theorem}
Let the matrix $A$ be $n$-square matrix as in $(1)$ with $a:=x$ and $b:=%
\mathbf{i}$ \ where $\mathbf{i}=\sqrt{-1}.$ Then%
\begin{equation}
\det (A)=(x^{2}+4)F_{n-1}(x)  \label{39}
\end{equation}%
where $F_{n}$ is $n$th Fibonacci number.

\begin{proof}
Applying Laplace expansion according to the first two and the last two rows
of the matrix $A$, we have%
\begin{equation*}
\det (A)=x^{2}\check{D}_{n-2}+4x\check{D}_{n-3}+4\check{D}_{n-4}
\end{equation*}%
here $~\check{D}_{n}=\det (tridiag_{n}(\mathbf{i},x,\mathbf{i})).$ Since%
\begin{equation*}
\det (tridiag_{n}(\mathbf{i},x,\mathbf{i}))=F_{n+1}(x),
\end{equation*}%
we have%
\begin{eqnarray*}
\det (A) &=&x^{2}F_{n-1}(x)+4xF_{n-2}(x)+4F_{n-3}(x) \\
&=&x^{2}(xF_{n-2}(x)+F_{n-3}(x))+4xF_{n-2}(x)+4F_{n-3}(x) \\
&=&(x^{2}+4)(xF_{n-2}(x)+F_{n-3}(x))=(x^{2}+4)F_{n-1}(x).
\end{eqnarray*}%
So that, the proof is completed.
\end{proof}
\end{theorem}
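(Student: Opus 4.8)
The plan is to strip the two ``doubled'' corner entries off $A$ and reduce everything to determinants of the \emph{unperturbed} tridiagonal matrix. Observe that with $a:=x$ and $b:=\mathbf{i}$ the matrix $A$ is exactly $tridiag_n(\mathbf{i},x,\mathbf{i})$ except that the two corner superdiagonal entries $A_{1,2}$ and $A_{n-1,n}$ equal $2\mathbf{i}$ rather than $\mathbf{i}$. Writing $\check{D}_m=\det(tridiag_m(\mathbf{i},x,\mathbf{i}))$, I would first record the key evaluation $\check{D}_m=F_{m+1}(x)$: expanding along the last row gives $\check{D}_m=x\check{D}_{m-1}-\mathbf{i}^2\check{D}_{m-2}=x\check{D}_{m-1}+\check{D}_{m-2}$ since $\mathbf{i}^2=-1$, and with $\check{D}_0=1=F_1$ and $\check{D}_1=x=F_2$ this is precisely the Fibonacci-polynomial recurrence $(38)$.

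Next I would peel off the left corner by a cofactor expansion of $\det(A)$ along its first row $(x,2\mathbf{i},0,\dots,0)$. The cofactor of $x$ is the determinant $P_{n-1}$ of the lower-right $(n-1)\times(n-1)$ block, which is again tridiagonal with only its \emph{bottom-right} superdiagonal entry doubled; the cofactor of $2\mathbf{i}$, after deleting row $1$ and column $2$ and then expanding along the resulting first column (whose only nonzero entry is the $\mathbf{i}$ in row $2$), produces $\mathbf{i}\,P_{n-2}$ of the same singly-perturbed shape. Tracking the cofactor sign and using $2\mathbf{i}\cdot(-\mathbf{i})=2$, this yields $\det(A)=x\,P_{n-1}+2\,P_{n-2}$.

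I would then evaluate the singly-perturbed determinant $P_m$ by expanding along its last row: the diagonal term contributes $x\check{D}_{m-1}$ (deleting the last column removes the doubled entry, so the surviving block is unperturbed), while the subdiagonal term, after a further one-column expansion that exposes the doubled entry $2\mathbf{i}$, contributes $2\check{D}_{m-2}$, again because $\mathbf{i}^2=-1$. Hence $P_m=x\check{D}_{m-1}+2\check{D}_{m-2}$. Substituting into the previous display gives $\det(A)=x(x\check{D}_{n-2}+2\check{D}_{n-3})+2(x\check{D}_{n-3}+2\check{D}_{n-4})=x^2\check{D}_{n-2}+4x\check{D}_{n-3}+4\check{D}_{n-4}$, a combination of three unperturbed tridiagonal determinants.

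Finally I would substitute $\check{D}_m=F_{m+1}$ to get $\det(A)=x^2F_{n-1}+4xF_{n-2}+4F_{n-3}$ and apply $F_{n-1}=xF_{n-2}+F_{n-3}$ to factor, obtaining $(x^2+4)(xF_{n-2}+F_{n-3})=(x^2+4)F_{n-1}$. The main obstacle is not any single step but the disciplined bookkeeping of the nested cofactor expansions: getting every sign right and, at each stage, correctly converting the $\mathbf{i}$-products into real contributions via $\mathbf{i}^2=-1$. I would also treat the small sizes separately, since the reduction presumes the two corner perturbations lie in disjoint blocks; in particular the identity requires $n\ge 3$ (for $n=2$ one has $\det(A)=x^2+2\ne x^2+4$).
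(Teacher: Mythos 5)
Your proof is correct and follows essentially the same route as the paper: both reduce $\det(A)$ to the combination $x^{2}\check{D}_{n-2}+4x\check{D}_{n-3}+4\check{D}_{n-4}$ of unperturbed tridiagonal determinants, identify $\check{D}_{m}=F_{m+1}(x)$, and factor using the Fibonacci recurrence. You merely carry out the corner reduction by nested single-row cofactor expansions through the singly-perturbed determinants $P_{m}$ (and add the worthwhile caveat that $n\geq 3$ is needed), where the paper invokes a two-row Laplace expansion at each end in a single step.
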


\begin{corollary}
Let the matrix $A$ be as in $(1)$ with $a:=x$ and $b:=\mathbf{i.}$ Then the
complex factorization of generalized Fibonacci-Pell numbers is the following
form:%
\begin{equation}
F_{n-1}(x)=\frac{1}{x^{2}+4}\dprod\limits_{k=1}^{n}\left( x+2\mathbf{i}\cos
\left( \frac{(k-1)\pi }{n-1}\right) \right) .  \label{40}
\end{equation}
\end{corollary}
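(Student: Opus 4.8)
The plan is to read the factorization off directly by combining the determinant computed in the immediately preceding Theorem with the eigenvalues furnished by Theorem 1, invoking the elementary fact that the determinant of a matrix equals the product of its eigenvalues (counted with multiplicity).

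First I would specialize the matrix $A$ of $(1)$ to the present setting $a:=x$ and $b:=\mathbf{i}$. Theorem 1 gives the eigenvalues of $A$ as $\lambda_k = a + 2b\cos\left(\frac{(k-1)\pi}{n-1}\right)$, so after this substitution the determinant factors as
\begin{equation*}
\det(A) = \prod_{k=1}^{n}\lambda_k = \prod_{k=1}^{n}\left(x + 2\mathbf{i}\cos\left(\frac{(k-1)\pi}{n-1}\right)\right).
\end{equation*}
Next I would invoke the preceding Theorem, which asserts $\det(A) = (x^2+4)F_{n-1}(x)$ for exactly this choice of $a$ and $b$. Equating the two expressions for $\det(A)$ and dividing through by $x^2+4$ produces precisely $(40)$, completing the argument.

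The only step deserving a word of care — rather than a genuine obstacle — is the division by $x^2+4$. As an identity in $\mathbb{C}[x]$ it is harmless: the factors corresponding to $k=1$ and $k=n$ are $x+2\mathbf{i}$ and $x-2\mathbf{i}$, whose product is $(x+2\mathbf{i})(x-2\mathbf{i}) = x^2 - 4\mathbf{i}^2 = x^2+4$. Hence $x^2+4$ divides the right-hand product as a polynomial, the two extreme eigenvalue factors cancel the prefactor exactly, and the surviving product over $k=2,\ldots,n-1$ is the polynomial $F_{n-1}(x)$. No passage to rational functions or side assumption $x^2+4\neq 0$ is required, which is what makes the corollary an immediate consequence of the two theorems rather than a separate computation.
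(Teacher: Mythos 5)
Your proposal is correct and follows essentially the same route as the paper: compute $\det(A)$ as the product of the eigenvalues from Theorem 1, equate with $\det(A)=(x^2+4)F_{n-1}(x)$ from the preceding theorem, and divide. Your added observation that the $k=1$ and $k=n$ factors are $x+2\mathbf{i}$ and $x-2\mathbf{i}$, whose product is exactly $x^2+4$, is a worthwhile refinement the paper omits, since it shows the division is a genuine polynomial identity rather than one requiring $x^2+4\neq 0$.
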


\begin{proof}
Since the eigenvalues of the matrix $A$ from $(7)$%
\begin{equation*}
\lambda _{k}=x+2\mathbf{i}\cos \left( \frac{(k-1)\pi }{n-1}\right) ,\
k=1,2,\ldots ,n
\end{equation*}%
the determinant of the matrix $A$ can be expressed as%
\begin{equation*}
\det (A)=\dprod\limits_{k=1}^{n}\left( x+2\mathbf{i}\cos \left( \frac{%
(k-1)\pi }{n-1}\right) \right) .
\end{equation*}%
By considering $(40)$ and Theorem $12$, the complex factorization of
generalized Fibonacci-Pell numbers is achieved.
\end{proof}

\textbf{Acknowledgement. }The authors are partially supported by TUBITAK and
the Office of Sel\c{c}uk University Research Project (BAP).

\end{document}